\documentclass[11pt,twoside]{article}
\usepackage{a4}
\usepackage{amssymb,amsmath,amsthm,latexsym}
\usepackage{amsfonts}
  \usepackage{amsfonts}
\usepackage{graphicx}

\newtheorem{theorem}{Theorem}[section]

\newtheorem{definition}[theorem]{Definition}

\newtheorem{lemma} [theorem]{Lemma}

\vspace{5cm}

\begin{document}
  
  \label{'ubf'}  
\setcounter{page}{1}                                 

\markboth {\hspace*{-9mm} \centerline{\footnotesize \sc
  A combinatorial  approach to the stronger Central Sets Theorem }
                 }
                { \centerline                           {\footnotesize \sc  
         Pintu Debnath                                                 } \hspace*{-9mm}              
               }

\vspace*{-2cm}

\begin{center}
{ 
       {\Large \textbf { \sc A combinatorial  approach to the stronger Central Sets Theorem for semigroups
                               }
       }
\\

\medskip

\author{Debnath, Pintu}
{\sc Pintu Debnath }\\
{\footnotesize Department of Mathematics,
			Basirhat College,
			Basirhat-743412, North 24th Parganas, West Bengal, India.}\\

{\footnotesize e-mail: {\it pintumath1989@gmail.com}}
}
\end{center}

\thispagestyle{empty}

\hrulefill



\begin{abstract}
H. Furstenberg introduced the notion  of central sets in terms
of topological dynamics and established the famous Central Sets Theorem. Later in [A new and stronger Central Sets Theorem, Fund. Math. 199 (2008), 155-175], D. De, N. Hindman, and D. Strauss established a stronger version of the  Central Sets Theorem that uses the algebra of the	Stone-\v Cech compactification of discrete semigroups. In this article, We  will provide a new and  combinatorial proof  of  the stronger Central Sets Theorem.
\end{abstract}

\textbf{Keywords:} Central Set Theorem; Stronger Central Sets theorem; Hales-Jewett theorem.
	
	\textbf{MSC 2020:}   22A15; 54D35; 05D10.

\section{Introduction}

In $1981$, H. Furstenberg \cite{F81} introduced the notions of Central sets using topological dynamics and proved the joint extension of two famous theorems:  one is the classical and possibly one of the first result of Ramsey theory; say \textit{van der Waerden's theorem} (\cite{vdw}), and the second one is the one of the first infinitary result in Ramsey theory; say \textit{Hindman theorem} (\cite{H74}). A few years later, in \cite{BH90}, V. Bergelson and N. Hindman established an equivalent definition of the Central sets. Basically, if $(S,\cdot )$ is a discrete semigroup, and $(\beta S,\cdot )$ is the corresponding semigroup of ultrafilters, then the Central sets are the members of the minimal idempotent ultrafilters. Later, this foundation of the notions of Central sets played a major role in the development of the \textit{Arithmetic Ramsey Theory:} most of the Ramsey theoretic configurations later found in the Central sets. After some seminal papers (for details see \cite{H20}), finally in \cite{DHS08}, D. De, N. Hindman, and D. Strauss proved a stronger version of this Central Sets Theorem (SCST). For some other versions of {\it SCST}, one can see \cite{P15, J15}.

For a general semigroup $(S,\cdot)$, a subset $A\subseteq S$ is said to be
\emph{syndetic} in $(S,\cdot)$ if there exists a finite nonempty set
$F\subseteq S$ such that
\[
\bigcup_{t\in F} t^{-1}A = S,
\]
where
\[
t^{-1}A = \{\, s\in S : t\cdot s \in A \,\}.
\]

A subset $A\subseteq S$ is said to be \emph{thick} if for every finite
nonempty subset $E\subseteq S$, there exists an element $x\in S$ such that
\[
E\cdot x \subseteq A.
\]

A subset $A\subseteq S$ is called \emph{piecewise syndetic} if there exists
a finite nonempty set $F\subseteq S$ such that
\[
\bigcup_{t\in F} t^{-1}A
\]
is thick in $S$. It is well known that a piecewise syndetic set can be
expressed as the intersection of a thick set and a syndetic set.

\begin{definition}
Let $(S,\cdot)$ be a semigroup and let $\mathcal{A}\subseteq \mathcal{P}(S)$.
The family $\mathcal{A}$ is called \textbf{collectionwise piecewise syndetic}
if and only if there exist functions
\[
G:\mathcal{P}_{f}(\mathcal{A})\to \mathcal{P}_{f}(S)
\quad \text{and} \quad
x:\mathcal{P}_{f}(\mathcal{A})\times \mathcal{P}_{f}(S)\to S
\]
such that for all $F\in \mathcal{P}_{f}(S)$ and all
$\mathcal{F},\mathcal{H}\in \mathcal{P}_{f}(\mathcal{A})$ with
$\mathcal{F}\subseteq \mathcal{H}$, one has
\[
F\cdot x(\mathcal{H},\mathcal{F})
\subseteq
\bigcup_{t\in G(\mathcal{F})} t^{-1}\!\left(\bigcap \mathcal{F}\right).
\]
\end{definition}

The following classical Central Sets Theorem is due to H.~Furstenberg \cite{F81}.

\begin{theorem}
Let $A$ be a central subset of $\mathbb{N}$, let $k\in\mathbb{N}$, and for
each $i\in\{1,2,\dots,k\}$ let
$\langle y_{i,n}\rangle_{n=1}^{\infty}$ be a sequence in $\mathbb{Z}$.
Then there exist sequences
$\langle a_n\rangle_{n=1}^{\infty}$ in $\mathbb{N}$ and
$\langle H_n\rangle_{n=1}^{\infty}$ in $\mathcal{P}_f(\mathbb{N})$ such that
\begin{itemize}
\item[(1)] $\max H_n < \min H_{n+1}$ for each $n\in\mathbb{N}$; and
\item[(2)] for each $i\in\{1,2,\dots,k\}$ and each
$F\in\mathcal{P}_f(\mathbb{N})$,
\[
\sum_{n\in F}\left(a_n+\sum_{t\in H_n}y_{i,t}\right)\in A.
\]
\end{itemize}
\end{theorem}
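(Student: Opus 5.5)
We will prove Furstenberg's theorem by the Bergelson--Hindman algebraic route, since central sets are exactly the members of minimal idempotents. Fix a minimal idempotent $p\in\beta\mathbb{N}$ with $A\in p$. Set
\[
A^\star=\{x\in A: -x+A\in p\},
\]
so that $A^\star\in p$ and $-x+A^\star\in p$ for every $x\in A^\star$. We will then build $a_n$ and $H_n$ inductively. The invariant maintained at stage $n$ is that, for each $i\in\{1,\dots,k\}$ and each nonempty $F\subseteq\{1,\dots,n\}$,
\[
\sum_{m\in F}\Bigl(a_m+\sum_{t\in H_m}y_{i,t}\Bigr)\in A^\star .
\]
Given this, the set
\[
B=A^\star\cap\bigcap_{i\le k}\ \bigcap_{\emptyset\ne F\subseteq\{1,\dots,n\}}\Bigl(-\textstyle\sum_{m\in F}(\cdots)+A^\star\Bigr)
\]
is a finite intersection of members of $p$, so $B\in p$ and in particular $B$ is central.

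The inductive step reduces to one claim: for a central set $B$ and any $N$, there exist $a\in\mathbb{N}$ and a finite $H$ with $\min H>N$ such that $a+\sum_{t\in H}y_{i,t}\in B$ for all $i\le k$. Taking $N=\max H_n$ and letting $a_{n+1},H_{n+1}$ be the resulting $a,H$ preserves the invariant. The new sum lands in $A^\star$ because $B\subseteq A^\star$. Combinations of the form (old $F$) plus the new term land in $A^\star$ because $B\subseteq -\sum_F(\cdots)+A^\star$.

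To prove the claim, we will pass to the product. Consider $Y=(\beta\mathbb{N})^k$ and the semigroup
\[
E=\Bigl\{\bigl(a+\textstyle\sum_{t\in H}y_{1,t},\dots,a+\sum_{t\in H}y_{k,t}\bigr): a\in\mathbb{N},\ H\in\mathcal P_f(\mathbb N),\ \min H>N\Bigr\}
\]
together with the diagonal $I=\{(a,\dots,a)\}$. For the operation to stay in the right coordinate structure we should use $\mathbb{Z}$ coordinates and afterwards require positivity, or restrict to elements that remain in $\mathbb{N}$; this needs care. The standard argument shows that $\bar p=(p,\dots,p)$ lies in the closure of $E$ and is minimal in $\overline E$; the key input is that $\overline I$ is an ideal of $\overline E$. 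Since $B\in p$, the box $\overline B^{\,k}$ is a neighbourhood of $\bar p$, so it meets $E$, which gives the required $a$ and $H$.

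The main obstacle is proving $\bar p\in K(\overline E)$. Membership $\bar p\in\overline E$ is easy. Minimality requires showing that $\overline{E}$ has $\overline I$ as an ideal: each $a+\sum y_{i,t}$ shifted by a large diagonal element $(b,\dots,b)$ lies in $I$ once $b$ is large enough to absorb negative $y$'s. Consequently $K(\overline I)\subseteq K(\overline E)$, and $\bar p$ is minimal in $\overline I\cong\beta\mathbb N$ via the diagonal. An alternative, if the algebra fights back, is a purely combinatorial proof via piecewise syndeticity and the Hales--Jewett theorem on words over the alphabet $\{y_{1,\cdot},\dots,y_{k,\cdot}\}$. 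That route would produce a monochromatic combinatorial line giving $a$ and $H$ simultaneously for all $i$.
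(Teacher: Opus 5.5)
Your outer induction is correct: the $A^\star$ device, the fact that $B\in p$, and the way $a_{n+1},H_{n+1}$ preserve the invariant are all right. So everything rests on the one-step claim, and your proof of that claim has the two sets in the wrong roles.

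First, the diagonal is not an ideal. The sum $(b,\ldots,b)+\bigl(a+\sum_{t\in H}y_{1,t},\ldots,a+\sum_{t\in H}y_{k,t}\bigr)$ stays off the diagonal whenever the sums $\sum_{t\in H}y_{i,t}$ differ. Making $b$ large only makes the coordinates positive, not equal.

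More seriously, if $E$ is the set of ``$H$-points'', then ``$\bar p\in\overline{E}$ is easy'' is circular. Since $\overline{B}^{\,k}$ is a neighbourhood of $\bar p$, the statement $\bar p\in\overline{E}$ \emph{is} the claim, and your minimality discussion never feeds into it. If instead you meant $E$ to include the diagonal, membership is easy but the last step breaks: $\overline{B}^{\,k}$ may meet $E$ only at a diagonal point $(a,\ldots,a)$, which yields no $H$.

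Finally, with a fixed bound $\min H>N$ your $E$ is not closed under addition, because two $H$-points with overlapping $H$'s need not sum to an $H$-point.

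The Bergelson--Hindman argument works with the roles swapped. In $(\beta\mathbb{Z})^k$ let $I_m$ be the $H$-points with $a\in\mathbb{N}$ and $\min H>m$, let $E_m=I_m\cup\{(a,\ldots,a):a\in\mathbb{N}\}$, and put $E=\bigcap_m\overline{E_m}$ and $I=\bigcap_m\overline{I_m}$.
\begin{itemize}
\item Intersecting over all $m$ makes $E$ a compact semigroup, since the second $H$ can always be taken beyond the first.
\item $I$ is an ideal of $E$, because diagonal plus $H$-point is an $H$-point.
\item $\bar p\in E$ trivially. A minimal idempotent $\bar q\le\bar p$ of $E$ has $q_i\le p$ in each coordinate, so $\bar q=\bar p$, because a minimal idempotent of $\beta\mathbb{N}$ is minimal in $\beta\mathbb{Z}$.
\item Hence $\bar p\in K(E)\subseteq I\subseteq\overline{I_N}$, and only now does $\overline{B}^{\,k}$ meet $I_N$.
\end{itemize}

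Alternatively, your Hales--Jewett fallback is the paper's route. The paper does not prove this theorem itself; it is quoted from Furstenberg. It obtains the stronger version combinatorially: piecewise syndetic sets are $J$-sets via Hales--Jewett, and the bound $\min H>N$ comes from the cited strengthening of the $J$-set property (apply the definition to the tails $\langle y_{i,N+t}\rangle_{t}$). The induction then uses a collectionwise piecewise syndetic family where you use $A^\star$.

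Plugged into your $A^\star$ induction, that fallback would give a complete proof. You would need that members of a minimal idempotent are piecewise syndetic, and you would need to shift far enough into the thick set to keep $a\in\mathbb{N}$ when $y_{i,t}\in\mathbb{Z}$. As written, though, the fallback is only a sentence, not an argument.
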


\begin{definition}\label{non commu cha central}
Let $A$ be a subset of a semigroup $S$.
The set $A$ is called \textbf{central} in $S$ if there exists a downward
directed family $\langle C_F\rangle_{F\in I}$ of subsets of $A$ such that
\begin{itemize}
\item[(i)] for each $F\in I$ and each $x\in C_F$, there exists $G\in I$
with $C_G\subseteq x^{-1}C_F$; and
\item[(ii)] the family $\{C_F : F\in I\}$ is collectionwise piecewise syndetic.
\end{itemize}
\end{definition}

The following theorem gives the Central Sets Theorem for infinite
commutative semigroups \cite[Theorem~14.11]{HS12}.

\begin{theorem}\label{CST com}
Let $(S,+)$ be an infinite commutative semigroup and let $A$ be a central
subset of $S$. For each $l\in\mathbb{N}$, let
$\langle y_{l,n}\rangle_{n=1}^{\infty}$ be a sequence in $S$.
Then there exist sequences
$\langle a_n\rangle_{n=1}^{\infty}$ in $S$ and
$\langle H_n\rangle_{n=1}^{\infty}$ in $\mathcal{P}_f(\mathbb{N})$ such that
$\max H_n<\min H_{n+1}$ for all $n\in\mathbb{N}$ and
\[
FS\!\left(\left\langle a_n+\sum_{t\in H_n}y_{f(n),t}\right\rangle_{n=1}^{\infty}\right)
\subseteq A
\]
for each $f\in\Phi$, where $\Phi$ is the set of all functions
$f:\mathbb{N}\to\mathbb{N}$ satisfying $f(n)\leq n$ for all $n\in\mathbb{N}$.
\end{theorem}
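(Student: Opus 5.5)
We prove Theorem~\ref{CST com} combinatorially, working only with the characterization of central sets in Definition~\ref{non commu cha central}. Fix the downward directed family $\langle C_F\rangle_{F\in I}$ of subsets of $A$ given there, together with the witnessing functions $G$ and $x$ from collectionwise piecewise syndeticity. The sequences $a_n$ and $H_n$ are built by induction on $n$, alongside indices $F_n\in I$. At stage $n$ the invariant is:
\[
\sum_{k\in K}\Bigl(a_k+\sum_{t\in H_k}y_{f(k),t}\Bigr)\in C_{F_n}
\]
for every nonempty $K\subseteq\{1,\dots,n\}$ and every $f\in\Phi$. Only the finitely many restrictions $f|_{\{1,\dots,n\}}$ matter, since $f(k)\le k\le n$.

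For the inductive step we first use property (i) of Definition~\ref{non commu cha central} together with downward directedness. For each of the finitely many sums $s$ already obtained, and for $s$ replaced by nothing, we choose $G_s$ with $C_{G_s}\subseteq -s+C_{F_n}$. Downward directedness then gives a single $D\in I$ with $C_D$ contained in all the $C_{G_s}$ and in $C_{F_n}$. It now suffices to find $a_{n+1}$ and $H_{n+1}$ with $\min H_{n+1}>\max H_n$ such that
\[
a_{n+1}+\sum_{t\in H_{n+1}}y_{l,t}\in C_D \quad\text{for all } l\le n+1,
\]
because then every new sum (old sum plus new term, or new term alone) lies in $C_{F_n}$. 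We then choose $F_{n+1}$ so that $C_{F_{n+1}}\subseteq C_D$ is compatible with the new sums, again via (i) and directedness.

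The central step is a finite lemma: if $B$ is piecewise syndetic in a commutative semigroup and $\langle y_{l,t}\rangle$ for $l\le m$ are sequences, then for every $N$ there are $a$ and a finite $H$ with $\min H>N$ and $a+\sum_{t\in H}y_{l,t}\in B$ for all $l\le m$. We apply it to $B=C_D$, which is piecewise syndetic by (ii) applied to $\mathcal F=\{C_D\}$. The proof uses the Hales--Jewett theorem. Let $F=G(\{C_D\})$ be the finite set with $\bigcup_{t\in F}(-t+C_D)$ thick. Colour words $w$ of length $L$ over the alphabet $\{1,\dots,m\}$ by choosing $t\in F$ with
\[
t+z+\sum_{j=1}^{L}y_{w(j),N+j}\in C_D,
\]
where $z$ is chosen by thickness to work simultaneously for all $m^L$ words. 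This is possible because the finitely many elements $\sum_j y_{w(j),N+j}$ form a finite set $E$, and thickness supplies $z$ with $E+z\subseteq\bigcup_{t\in F}(-t+C_D)$. Hales--Jewett with $|F|$ colours and $L$ large yields a monochromatic combinatorial line with variable set $H\subseteq\{N+1,\dots,N+L\}$ and constant colour $t$. Put $a=t+z+\sum_{j\notin H}y_{w(j),N+j}$, using commutativity to separate the variable positions. Then $a+\sum_{j\in H}y_{l,N+j}\in C_D$ for every $l$. After reindexing $H$ so that it sits beyond $N=\max H_n$, we take $a_{n+1}=a$ and $H_{n+1}=H$.

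The main obstacle is the bookkeeping in this step: the element $a$ must be shared by all $l$, the invariant must hold for all $f\in\Phi$ at once, and the sets $C_{F_n}$ must shrink correctly through (i). The Hales--Jewett reduction handles uniformity in $l$. Directedness handles the finitely many translates, and the remaining work is careful indexing. Finally, the $FS$ condition for arbitrary finite $K$ and $f\in\Phi$ follows from the invariant at stage $\max K$, since $C_{F_n}\subseteq A$.
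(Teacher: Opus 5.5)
\textbf{Gap: the shrinking of the target set.} One step fails as written. Your invariant requires every sum over a nonempty $K\subseteq\{1,\dots,n+1\}$ to lie in $C_{F_{n+1}}$, and you choose $C_{F_{n+1}}\subseteq C_D$. But for each old sum $s$ you built $C_D\subseteq C_{G_s}\subseteq -s+C_{F_n}$. So $s\in C_{F_{n+1}}$ would force $s+s\in C_{F_n}$. Likewise, each new sum $s+w$, with $w$ the new term, would have to lie in $C_D$. Nothing in the construction provides either.

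Property (i) and directedness cannot supply this. They only produce members of the family contained in prescribed translates, never members containing prescribed points. So the invariant cannot be propagated. Your closing remark that the sets $C_{F_n}$ ``must shrink correctly through (i)'' is exactly where the misstep lies.

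\textbf{Repair.} Never shrink the target set. Fix one $F_0\in I$ and keep the invariant that all sums lie in $C_{F_0}$. At each stage, shrink only the set that receives the new term: apply (i) to each old sum $s\in C_{F_0}$ and use directedness to get
\[
C_D\subseteq C_{F_0}\cap\bigcap_s(-s+C_{F_0}).
\]
This is the role of the fixed $A_N$ in the paper. The paper proves the stronger Theorem~\ref{central commutative}, and your induction on $n$ is that proof specialized to the increasing sets $\{\langle y_{l,t}\rangle_{t=1}^{\infty}:l\le n\}$.

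With this change the rest of your argument is correct. Your Hales--Jewett lemma, run on positions $N+1,\dots,N+L$, gives $\min H>N$ directly. That is more self-contained than the paper's appeal to \cite[Lemma~14.8.2]{HS12}.
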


For $m\in\mathbb{N}$, define
\[
\mathcal{H}_m=\left\{
(H_1,\dots,H_m)\in\bigl(\mathcal{P}_f(\mathbb{N})\bigr)^m :
\max H_t<\min H_{t+1}\text{ for }1\le t<m
\right\}.
\]

The following theorem is the most general form of the Central Sets Theorem
prior to the stronger version in \cite{DHS08}
(see \cite[Theorem~14.15]{HS12}).

\begin{theorem}\label{CST noncom}
Let $S$ be a semigroup and let $A$ be a central subset of $S$.
For each $l\in\mathbb{N}$, let
$\langle y_{l,n}\rangle_{n=1}^{\infty}$ be a sequence in $S$.
Given $l,m\in\mathbb{N}$, $a\in S^{m+1}$, and $H\in\mathcal{H}_m$, define
\[
\omega(a,H,l)
=
\left(\prod_{i=1}^{m}
\left(a(i)\cdot\prod_{t\in H(i)}y_{l,t}\right)\right)
\cdot a(m+1).
\]

There exist sequences
$\langle m(n)\rangle_{n=1}^{\infty}$,
$\langle a_n\rangle_{n=1}^{\infty}$, and
$\langle H_n\rangle_{n=1}^{\infty}$ such that
\begin{itemize}
\item[(1)] for each $n\in\mathbb{N}$,
$m(n)\in\mathbb{N}$, $a_n\in S^{m(n)+1}$,
$H_n\in\mathcal{H}_{m(n)}$, and
$\max H_{n,m(n)}<\min H_{n+1,1}$; and
\item[(2)] for each $f\in\Phi$,
\[
FP\!\left(
\left\langle
\omega\bigl(a_n,H_n,f(n)\bigr)
\right\rangle_{n=1}^{\infty}
\right)
\subseteq A.
\]
\end{itemize}
\end{theorem}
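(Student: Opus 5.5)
The plan is to build the sequences $\langle m(n)\rangle$, $\langle a_n\rangle$, $\langle H_n\rangle$ by induction on $n$, working only with the combinatorial characterization of central sets in Definition~\ref{non commu cha central}. Fix a downward directed family $\langle C_F\rangle_{F\in I}$ of subsets of $A$ satisfying (i) and (ii). Alongside the sequences we carry indices $F_n\in I$ and keep the following hypothesis: for every $f\in\Phi$ and every nonempty $K\subseteq\{1,\dots,n\}$,
\[
\prod_{j\in K}\omega\bigl(a_j,H_j,f(j)\bigr)\in C_{F_{\min K}},
\]
and in addition, for every such product $x$, some $G\in I$ satisfies $C_G\subseteq x^{-1}C_{F_{\min K}}$. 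The point of (i) is that after finitely many steps there are only finitely many products $x$ to handle: $f$ matters only through $f(1),\dots,f(n)$ with $f(j)\le j$. By downward directedness, together with the fact that $x^{-1}C_{F}\supseteq C_G$ for the relevant $G$, we can then choose one $F_{n+1}\in I$ with
\[
C_{F_{n+1}}\subseteq \bigcap_{x} x^{-1}C_{F_{\min K}}\cap C_{F_1}.
\]

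The induction step needs a single block $\omega(a,H,l)$ lying in $C_{F_{n+1}}$ simultaneously for all $l\in\{1,\dots,n+1\}$, with $\min H(1)>\max H_{n,m(n)}$. So the central lemma, proved first, is a \emph{single-step} statement. For a collectionwise piecewise syndetic family $\mathcal{A}$, any $C\in\mathcal{A}$, finitely many sequences $y_1,\dots,y_k$, and any $N$, there exist $m$, $a\in S^{m+1}$ and $H\in\mathcal{H}_m$ with $\min H(1)>N$ and $\omega(a,H,l)\in C$ for all $l\le k$. I would prove this with the Hales--Jewett theorem, in the spirit of the paper's keywords. Let $t$ be the number of elements of $G(\{C\})$ and take a Hales--Jewett number for the alphabet $\{1,\dots,k\}$. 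Words $w$ of length $M$ correspond to products $\prod_{i} y_{w(i),N+i}$, and there are finitely many of them, forming a finite set $E\subseteq S$. Collectionwise piecewise syndeticity gives a point $x$ with $E\cdot x\subseteq\bigcup_{s\in G(\{C\})}s^{-1}C$. This colors the words by the element $s$ that works. A monochromatic combinatorial line with variable positions $H$ then yields $s\cdot(\text{product})\cdot x\in C$ for every $l$.

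The main obstacle is that the product along a word interleaves variable and constant positions. The constant positions must be absorbed into the $a(i)$, so the line's constant segments become the $a(i)$, with $s$ prepended to $a(1)$ and $x$ appended to $a(m+1)$. This is exactly why $\omega$ has that interleaved shape with $m+1$ entries of $a$. One then checks that the variable positions split into consecutive runs forming $H\in\mathcal{H}_m$, and that the choice of $s$ does not depend on $l$, which holds because the line is monochromatic.

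With the lemma in hand, apply it to $C=C_{F_{n+1}}$ and the sequences $y_1,\dots,y_{n+1}$, with $N=\max H_{n,m(n)}$. This gives $a_{n+1}$ and $H_{n+1}$. Each new product is either $\omega(a_{n+1},H_{n+1},l)$ alone, which lies in $C_{F_{n+1}}\subseteq C_{F_1}\subseteq A$, or $x\cdot\omega(a_{n+1},\cdots)$ for an earlier product $x$. In the latter case the choice of $F_{n+1}$ puts it in $C_{F_{\min K}}$, and property (i) applied to these new products restores the hypothesis. Since every $C_F\subseteq A$, condition (2) follows for each $f\in\Phi$.
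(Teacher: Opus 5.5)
Your argument is correct, and it reaches the statement by a more direct route than the paper.

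The paper never proves Theorem~\ref{CST noncom} on its own. It proves the stronger Theorem~\ref{noncom central}, indexed by all of $\mathcal{P}_f({}^{\mathbb{N}}S)$, by induction on $|F|$. The present statement is then meant to follow by taking the chain $\{y_1\}\subsetneq\{y_1,y_2\}\subsetneq\cdots$ with singleton blocks $H(i)=\{\tau(i)\}$. The paper's ingredients are:
\begin{itemize}
\item[(a)] piecewise syndetic sets are $J$-sets, via Hales--Jewett, with the words placed on coordinates $1,\dots,N$ and single variable entries $f(t_j)$ read off the line;
\item[(b)] the cited shift Lemma~\ref{J set noncom stro}, which pushes $\tau(F)(1)$ past the earlier blocks;
\item[(c)] a fixed $A_N$, together with $A_M\subseteq A_N\cap\bigcap_{c\in C}c^{-1}A_N$.
\end{itemize}

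You instead induct on $n$, using $f(j)\le j$ to keep the set of earlier products finite. You also fold (a) and (b) into a single lemma by placing the words on coordinates $N+1,\dots,N+M$, so no separate shift lemma is needed.

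Grouping the variable positions into maximal runs fits $\mathcal{H}_m$ exactly. Every intermediate $a(i)$ is then a nonempty constant segment, and $s$ and $x$ are absorbed into $a(1)$ and $a(m+1)$. By contrast, the paper's singleton extraction, as written, makes some $a_j$ an empty product whenever two variable positions of the line are adjacent. That is not an element of a semigroup without identity, so the paper needs an extra padding device there.

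What the paper's route buys is the stronger, set-indexed conclusion. Its specialization to this statement also needs a little care when the $y_l$ are not distinct, because strict inclusions are required. What your route buys is a self-contained proof of exactly this statement.

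Three small points:
\begin{itemize}
\item Tracking $C_{F_{\min K}}$ is more than you need. It suffices to keep every product in the single set $C_{F_1}$ and choose $C_{F_{n+1}}\subseteq C_{F_1}\cap\bigcap_x x^{-1}C_{F_1}$, which is what the paper does with its fixed $A_N$.
\item The ``in addition'' clause of your hypothesis is automatic from (i), so there is nothing to maintain there.
\item Say explicitly that a word $w$ is colored by, for instance, the least $s$ in a fixed enumeration of $G(\{C\})$ with $s\cdot P(w)\cdot x\in C$.
\end{itemize}
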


In \textbf{ Section 2 } and \textbf{ Section 3 } we will prove a stronger version of Theorem \ref{CST com} and \ref{CST noncom}  respectively by combinatorial the characterization of central sets.

\section{Commutative stronger central set theorem}

 In this section, we will prove the following from \cite[ Theorem 14.8.4]{HS12} for commutative semigroup.

\begin{theorem}\label{central commutative}

Let $\left(S,+\right)$ be a  commutative semigroup. Let $C$ be a
central subset of $S$. Then there exist functions $\alpha:\mathcal{P}_{f}\left(^{\mathbb{N}}S\right)\rightarrow S$
such that 
\begin{itemize}
    \item[(1)] let $F,G\in\mathcal{P}_{f}\left(^{\mathbb{N}}S\right)$
and $F\subsetneq G$, then $\max H\left(F\right)<\min H\left(G\right)$,

\item[(2)]  whenever $r\in\mathbb{N}$, $G_{1},G_{2},...,G_{r}\in\mathcal{P}_{f}\left(^{\mathbb{N}}S\right)$
such that $G_{1}\subsetneq G_{2}\subsetneq\ldots\subsetneq G_{r}$
and for each $i\in\left\{ 1,2,\ldots,r\right\} $, $f_{i}\in G_{i}$
one has 
\[
\sum_{i=1}^{r}\left(\alpha\left(G_{i}\right)+\sum_{t\in H\left(G_{i}\right)}f_{i}\left(t\right)\right)\in C.
\]
\end{itemize}

\end{theorem}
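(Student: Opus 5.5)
We construct $\alpha$ and $H$ simultaneously by induction on $|F|$ for $F\in\mathcal{P}_f({}^{\mathbb{N}}S)$. The only tools are the combinatorial characterization of central sets (Definition \ref{non commu cha central}) and a finite Hales--Jewett-type step that comes from collectionwise piecewise syndeticity. Fix a downward directed family $\langle C_F\rangle_{F\in I}$ of subsets of $C$ with properties (i) and (ii). Along with $\alpha(F)$ and $H(F)$ we will choose an index $\gamma(F)\in I$ with $C_{\gamma(F)}\subseteq C$. We carry the following inductive hypothesis: whenever $G_1\subsetneq\cdots\subsetneq G_r=F$ and $f_i\in G_i$, the sum $x=\sum_{i=1}^r(\alpha(G_i)+\sum_{t\in H(G_i)}f_i(t))$ lies in $C_{\gamma(G_1)}$, and moreover $x\in C_{\gamma(G_1)}$ is realised through the chain in a way that lets us shrink. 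Concretely, we want
\[
\sum_{i=1}^{r}\Bigl(\alpha(G_i)+\sum_{t\in H(G_i)}f_i(t)\Bigr)\in C_{\gamma(G_1)}
\quad\text{and}\quad
C_{\gamma(G_r)}\subseteq -x'+C_{\gamma(G_1)}
\]
for all initial partial sums $x'$ along the chain. Property (i) gives, for each such $x'\in C_{\gamma(G_1)}$, an index $J$ with $C_J\subseteq -x'+C_{\gamma(G_1)}$. Since $F$ has only finitely many proper subsets and chains, there are only finitely many partial sums $x'$. Downward directedness then lets us pick a single index $\gamma_0\in I$ below all the required ones.

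For the inductive step, let $F$ be given with all proper subsets already treated. Set $m=\max\bigcup\{H(K):\emptyset\ne K\subsetneq F\}$, or $m=0$ if $F$ is a singleton. Let $\gamma_0$ be the index from the previous paragraph. We then need $a\in S$ and $H\in\mathcal{P}_f(\mathbb{N})$ with $\min H>m$ such that $a+\sum_{t\in H}f(t)\in C_{\gamma_0}$ for every $f\in F$; we then put $\alpha(F)=a$, $H(F)=H$, and choose $\gamma(F)$ below $\gamma_0$ and below the indices obtained by applying (i) to the new sums. This gives (1), since $\min H(G)>\max H(F)$ when $F\subsetneq G$, because $H(F)$ was used in computing the bound $m$ for $G$. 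It also gives (2) by telescoping along the chain. The key point is that membership of the full sum in $C_{\gamma(G_1)}\subseteq C$ is preserved at every stage, since each new term lands in a set of the form $-x'+C_{\gamma(G_1)}$.

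The main obstacle is the one-step lemma: given a member $D=C_{\gamma_0}$ of a collectionwise piecewise syndetic family (in fact $D$ is piecewise syndetic), finitely many sequences $f_1,\dots,f_k$, and $m$, we must find $a$ and $H$ with $\min H>m$ and $a+\sum_{t\in H}f_j(t)\in D$ for all $j$. I would prove it via the Hales--Jewett theorem. Write $D\supseteq$ a set whose union of finitely many translates $\bigcup_{t\in G}(-t+D)$ is thick. Colour words over the alphabet $\{1,\dots,k\}$ of a large length $N$: a word $w$ is mapped to the element $\sum_{s=1}^N f_{w(s)}(m+s)+x$, where $x$ is chosen by thickness so that all $k^N$ such elements lie in $\bigcup_{t\in G}(-t+D)$. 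We colour $w$ by some $t\in G$ witnessing membership. Hales--Jewett with $|G|$ colours yields a monochromatic combinatorial line, with variable positions $H\subseteq\{m+1,\dots,m+N\}$ and fixed positions contributing a constant $b$. Setting $a=t+x+b$, and using commutativity to rearrange the sum, we get $a+\sum_{s\in H}f_j(s)\in D$ for all $j$. The delicate point is that we need $\bigcap$ of the family members to be handled uniformly, which is why the induction intersects indices by downward directedness rather than intersecting sets directly. I expect checking that $C_{\gamma_0}$ itself (not merely $\bigcap\mathcal{F}$) is piecewise syndetic with the right uniformity to be where care is needed. Definition (ii) applied to $\mathcal{F}=\{C_{\gamma_0}\}$ gives this directly.
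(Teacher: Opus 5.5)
Your overall strategy matches the paper's: induction on $|F|$, property (i) plus downward directedness to pass to a single member of the family, and Hales--Jewett inside a piecewise syndetic member. Your one-step lemma is correct. Placing the Hales--Jewett coordinates at $m+1,\dots,m+N$ gives the $\min H>m$ version directly, whereas the paper imports that version from Hindman--Strauss.

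\textbf{The gap.} The inductive invariant with the recorded indices $\gamma(F)$ does not close as written. For the one-term chain $G_1=F$, your invariant says $y_f:=\alpha(F)+\sum_{t\in H(F)}f(t)\in C_{\gamma(F)}$. But you choose $\alpha(F),H(F)$ so that $y_f\in C_{\gamma_0}$, and only \emph{afterwards} take $\gamma(F)$ below $\gamma_0$ and below further indices. So $C_{\gamma(F)}$ may be strictly smaller than $C_{\gamma_0}$, and $y_f\in C_{\gamma(F)}$ is lost. For this chain, ``applying (i) to the new sums'' relative to $C_{\gamma(G_1)}=C_{\gamma(F)}$ is even circular.

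This matters. At a later stage $F'\supsetneq F$ you apply (i) to the partial sum $x'=y_f$ as an element of $C_{\gamma(F)}$, which is exactly what you no longer know. Your second clause, $C_{\gamma(G_r)}\subseteq -x'+C_{\gamma(G_1)}$, is either superfluous or, if $x'$ may be the full sum, unattainable: for $r=1$ it asks for $y_f+C_{\gamma(F)}\subseteq C_{\gamma(F)}$, which (i) never provides.

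\textbf{The fix.} Drop the second clause and the extra shrinking, and set $\gamma(F)=\gamma_0$. Then $y_f\in C_{\gamma(F)}$, and the appendability of later terms is handled entirely by the choice of $\gamma_0$ at the later stage, as your construction of $\gamma_0$ already does.

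The paper's bookkeeping is simpler still and records no indices. Fix one $N\in I$ and require every chain sum to lie in $C_N$. At the step for $F$, the finitely many sums $c$ of chains of proper subsets lie in $C_N$ by induction. Then (i) and directedness give $M$ with $C_M\subseteq C_N\cap\bigcap_c(-c+C_N)$. Taking the new terms in $C_M$ with $\min H(F)>m$ puts every chain sum ending at $F$ into $C_N\subseteq C$. With either repair, your argument is complete.
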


From \cite[Definition~14.8.1]{HS12}, we recall the notion of $J$-sets,
which play a crucial role in the proof of the stronger form of the
Central Sets Theorem.

\begin{definition}
Let $(S,+)$ be a commutative semigroup and let $A\subseteq S$.
The set $A$ is called a \textbf{$J$-set} if and only if, for every
$F\in\mathcal{P}_f({}^{\mathbb{N}}S)$, there exist
$a\in S$ and $H\in\mathcal{P}_f(\mathbb{N})$ such that
\[
a+\sum_{t\in H} f(t)\in A
\quad \text{for all } f\in F.
\]
\end{definition}

From \cite[Lemma~14.8.2]{HS12}, we obtain the following stronger property
of $J$-sets, which will be used in the proof of our main result in this
section.

\begin{lemma}\label{J set commit stron}
Let $(S,+)$ be a commutative semigroup and let $A\subseteq S$ be a
$J$-set.
Then, for every $m\in\mathbb{N}$ and every
$F\in\mathcal{P}_f({}^{\mathbb{N}}S)$, there exist
$a\in S$ and $H\in\mathcal{P}_f(\mathbb{N})$ such that
\[
\min H > m
\quad\text{and}\quad
a+\sum_{t\in H} f(t)\in A
\quad \text{for all } f\in F.
\]
\end{lemma}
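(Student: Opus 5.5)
We reduce the statement to the defining property of a $J$-set. Given $m\in\mathbb{N}$ and $F\in\mathcal{P}_f({}^{\mathbb{N}}S)$, we build from $F$ a new finite family $F'\subseteq{}^{\mathbb{N}}S$ of shifted sequences, apply the definition of $J$-set to $F'$, and then translate the resulting index set back past $m$.

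\textbf{Step 1 (the shift).} For $f\in F$, define the shifted sequence $f'$ by $f'(t)=f(t+m)$ for all $t\in\mathbb{N}$, and put $F'=\{f' : f\in F\}$. Then $F'\in\mathcal{P}_f({}^{\mathbb{N}}S)$, since it is nonempty and has at most $|F|$ elements.

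\textbf{Step 2 (use that $A$ is a $J$-set).} Applying the definition of $J$-set to $F'$ gives $b\in S$ and $K\in\mathcal{P}_f(\mathbb{N})$ such that
\[
b+\sum_{t\in K} f'(t)\in A \quad\text{for all } f\in F.
\]

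\textbf{Step 3 (translate back).} Set $H=K+m=\{t+m : t\in K\}$ and $a=b$. Since $\min K\ge 1$, we get $\min H>m$. Because the semigroup is commutative, the finite sum $\sum_{t\in H}$ is independent of order. Reindexing $s=t+m$ is a bijection $K\to H$, so
\[
\sum_{s\in H} f(s)=\sum_{t\in K} f(t+m)=\sum_{t\in K} f'(t).
\]
Hence $a+\sum_{t\in H} f(t)\in A$ for all $f\in F$, which is exactly what the statement requires.

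\textbf{Main obstacle.} There is essentially none. The one point needing care is that the shift must be uniform across all $f\in F$, so that a single $H$ serves every member of $F$. Using the same translation $t\mapsto t+m$ for every $f$ gives this automatically.
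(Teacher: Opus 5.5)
Your proof is correct: shifting every $f\in F$ by the same $m$, applying the $J$-set property to the shifted family, and translating the index set back gives $\min H\ge m+1>m$ with the relevant sums unchanged. The paper gives no proof of its own and simply cites \cite[Lemma~14.8.2]{HS12}; your shift-and-reindex argument is the standard way to establish that cited lemma, and it uses $\mathbb{N}=\{1,2,\ldots\}$, which the paper follows (if $0\in\mathbb{N}$, you would shift by $m+1$ instead).
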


We will use the Hales--Jewett Theorem \cite{HJ63} to show that every piecewise
syndetic set is a $J$-set. We begin with a brief review of the necessary
combinatorial terminology.

For $t\in\mathbb{N}$, let
\[
[t]=\{1,2,\ldots,t\}.
\]
Words of length $N$ over the alphabet $[t]$ are the elements of $[t]^N$.
A \emph{variable word} is a word over the alphabet
$[t]\cup\{*\}$ in which the symbol $*$ occurs at least once and serves as
a variable.
Given a variable word $\tau(*)$, the associated
\emph{combinatorial line} is
\[
L_{\tau}
=
\bigl\{
\tau(1),\tau(2),\ldots,\tau(t)
\bigr\},
\]
where $\tau(i)$ is obtained by replacing every occurrence of $*$ in
$\tau(*)$ by $i$.

\begin{theorem}[Hales--Jewett]\label{HJ}
For all $r,t\in\mathbb{N}$, there exists a number $HJ(r,t)$ such that,
whenever $N\geq HJ(r,t)$ and the set $[t]^N$ is colored with $r$ colors,
there exists a monochromatic combinatorial line.
\end{theorem}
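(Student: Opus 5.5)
We prove the Hales--Jewett Theorem by the classical induction on the alphabet size $t$, following Shelah's argument. This keeps the number $HJ(r,t)$ explicit and primitive recursive, and it avoids the double induction of the original proof in \cite{HJ63}. The base case $t=1$ is trivial: $[1]^N$ has one point, and the variable word $*\,*\cdots*$ gives a line consisting of that point, so $HJ(r,1)=1$. Now assume $HJ(r,t-1)$ exists for every $r$; we produce $HJ(r,t)$.

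The key tool is Shelah's insertion lemma. Fix $n=HJ(r,t-1)$ and consider words of length $N=N_1+\dots+N_n$, split into $n$ consecutive blocks. We choose $N_n, N_{n-1},\dots,N_1$ in turn, from the last block backwards, so that for any $r$-colouring $c$ of $[t]^N$ there are variable words $\tau_i$ on block $i$ with the following property. For each $i$ and each choice of the other blocks from their lines, the colour of the concatenation does not change when $\tau_i(t-1)$ in block $i$ is replaced by $\tau_i(t)$.

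To get $\tau_i$ we do not need the full Hales--Jewett theorem. In block $i$ of length $N_i$, look at the $N_i+1$ words $w_j=t^j(t-1)^{N_i-j}$ for $0\le j\le N_i$. Each $w_j$ induces a colouring of the remaining coordinates, namely the already-reduced alphabets of the other blocks. That colouring takes at most $r^{M}$ values, where $M$ bounds the number of relevant configurations. So once $N_i\ge r^{M}$, the pigeonhole principle gives $j<j'$ with identical induced colourings. Set $\tau_i=t^{j}*^{\,j'-j}(t-1)^{N_i-j'}$. Then $\tau_i(t-1)=w_j$ and $\tau_i(t)=w_{j'}$, and the two receive the same colour in every context. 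Processing the blocks in the right order makes the bound $M$ finite at each step. This is the main obstacle: the bookkeeping of which coordinates are already fixed, and ensuring that the contexts used at step $i$ range over $[t]$ for earlier blocks and over the line $\tau_k([t])$ for later blocks.

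With these $\tau_1,\dots,\tau_n$ in hand, define an $r$-colouring $c'$ of $[t-1]^n$ by $c'(x_1\cdots x_n)=c(\tau_1(x_1)\cdots\tau_n(x_n))$. By the induction hypothesis there is a $c'$-monochromatic combinatorial line in $[t-1]^n$, given by a variable word $\sigma$. Substituting $\tau_k$ into position $k$ of $\sigma$ produces a variable word $\rho$ of length $N$. For $1\le s\le t-1$, the words $\rho(s)$ all have the same colour by the choice of $\sigma$. Moreover $\rho(t)$ is obtained from $\rho(t-1)$ by changing, in each block $k$ with $\sigma_k=*$, the entry $\tau_k(t-1)$ into $\tau_k(t)$. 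We make these changes one block at a time, and each single change preserves the colour by the insertion property. Hence $\rho(t)$ has the colour of $\rho(t-1)$, and $L_\rho$ is monochromatic. Finally, for $N\ge HJ(r,t):=N_1+\dots+N_n$, embed $[t]^{HJ(r,t)}$ into $[t]^N$ by padding every word with a fixed suffix. A monochromatic line in the subcube then gives one in $[t]^N$, which completes the induction.
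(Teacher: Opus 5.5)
\textbf{Comparison with the paper.} The paper gives no proof of this statement. It quotes the theorem from \cite{HJ63} and uses it as a black box in the two proofs that piecewise syndetic sets are $J$-sets. Your Shelah-style induction on $t$ is therefore an independent route, and its architecture is right. The following pieces all work:
\begin{itemize}
\item the insertion words $t^{j}*^{\,j'-j}(t-1)^{N_i-j'}$, obtained by pigeonhole;
\item the induced $r$-colouring $c'$ of $[t-1]^n$ with $n=HJ(r,t-1)$;
\item the substituted word $\rho$ and the block-by-block passage from $\rho(t-1)$ to $\rho(t)$;
\item the padding argument for larger $N$.
\end{itemize}
This route also gives an explicit primitive recursive bound, which the citation does not.

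\textbf{The gap.} As written, the proposal breaks at exactly the bookkeeping you call the main obstacle.
\begin{itemize}
\item When $\tau_i$ is chosen, only the blocks already processed can be replaced by their lines. The blocks not yet processed must range over all of $[t]^{N_k}$. So ``the already-reduced alphabets of the other blocks'' cannot be meant literally; read that way, the construction is circular.
\item Your two orders also clash. Contexts that are full words on earlier blocks and lines on later blocks force you to choose $\tau_n,\dots,\tau_1$ in that order. Then $M=t^{N_1+\cdots+N_{i-1}}\cdot t^{\,n-i}$, so $N_i$ depends on $N_1,\dots,N_{i-1}$. Yet you fix $N_n$ first, when $N_1,\dots,N_{n-1}$ do not yet exist.
\end{itemize}

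\textbf{The repair.} Match the two orders.
\begin{itemize}
\item Keep your backward choice of sizes: put $N_n=r^{t^{n-1}}$ and $N_i=r^{M_i}$ with $M_i=t^{\,i-1}\cdot t^{\,N_{i+1}+\cdots+N_n}$.
\item Then choose $\tau_1,\tau_2,\dots,\tau_n$ forward. At step $i$, use as contexts $\tau_1(x_1)\cdots\tau_{i-1}(x_{i-1})$ with $x_k\in[t]$ on the left, and arbitrary words of $[t]^{N_{i+1}+\cdots+N_n}$ on the right.
\item There are $M_i$ such contexts, so pigeonhole applies to the $N_i+1>r^{M_i}$ words $w_j$.
\item The colour equality holds for arbitrary right-hand words, so it specializes to right-hand words $\tau_{i+1}(x_{i+1})\cdots\tau_n(x_n)$. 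That is exactly the insertion property you invoke in your final paragraph.
\end{itemize}
Alternatively, keep your context description and choose the sizes forward, with $N_i=r^{\,t^{N_1+\cdots+N_{i-1}}\cdot t^{\,n-i}}$.

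With the orders made explicit in either way, the proof is complete.
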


\begin{lemma}\label{PiceJcomu}
Every piecewise syndetic set is a $J$-set.
\end{lemma}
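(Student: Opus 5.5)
We prove Lemma \ref{PiceJcomu} in the commutative setting $(S,+)$ of this section, using the Hales--Jewett Theorem \ref{HJ}. Let $A\subseteq S$ be piecewise syndetic. Then there is a finite nonempty set $E=\{s_1,\dots,s_r\}\subseteq S$ such that
\[
T=\bigcup_{j=1}^{r}(-s_j+A)
\]
is thick, where $-s+A=\{x\in S: s+x\in A\}$. Fix $F=\{f_1,\dots,f_t\}\in\mathcal{P}_f({}^{\mathbb{N}}S)$; we must find $a\in S$ and $H\in\mathcal{P}_f(\mathbb{N})$ with $a+\sum_{n\in H}f(n)\in A$ for every $f\in F$. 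Set $N=HJ(r,t)$.

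Each word $w=(w_1,\dots,w_N)\in[t]^N$ determines the element
\[
\sigma(w)=\sum_{n=1}^{N} f_{w_n}(n)\in S .
\]
The set $K=\{\sigma(w):w\in[t]^N\}$ is finite. Since $T$ is thick, there is $x\in S$ with $K+x\subseteq T$. Thus for every $w$ some $j\in[r]$ satisfies $s_j+\sigma(w)+x\in A$. Colour $w$ by the least such $j$; this is an $r$-colouring of $[t]^N$.

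By Theorem \ref{HJ} there is a variable word $\tau(*)$ whose line $L_\tau$ is monochromatic, say of colour $j$. Let $H=\{n\in[N]:\tau_n=*\}$, which is nonempty, and put
\[
a=s_j+x+\sum_{n\notin H}f_{\tau_n}(n).
\]
For each $i\in[t]$, commutativity gives
\[
s_j+\sigma(\tau(i))+x = a+\sum_{n\in H}f_i(n)\in A .
\]
This holds for every $f_i\in F$, so $A$ is a $J$-set.

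If $\sum_{n\notin H}$ is empty and $S$ has no identity, we simply take $a=s_j+x$. So $a\in S$ always, since it contains the genuine summands $s_j$ and $x$. This is why we multiply on the left by $s_j$ and do not work with $\sigma$ alone.

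The main point is the coding step: we must choose the colouring so that constancy along a combinatorial line translates exactly into the $J$-set condition. Assigning coordinate $n$ the value $f_{w_n}(n)$ does this. The non-variable coordinates are then absorbed into $a$, and the variable coordinates give $\sum_{n\in H}f_i(n)$ for every $i$ at once.
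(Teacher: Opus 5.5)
Your proof is correct and follows the paper's route. Both arguments encode words of $[t]^N$ by $\sigma(w)=\sum_{n=1}^N f_{w_n}(n)$, use thickness of $\bigcup_j(-s_j+A)$ to get one translate $x$, colour by the least index $j$, and read off $H$ and $a$ from a monochromatic combinatorial line. Your bookkeeping is cleaner than the paper's: the paper drops the translate $t_i$ in its final display and ignores a possibly empty sum over the non-variable coordinates, and you handle both by absorbing $s_j+x$ into $a$.
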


\begin{proof}
Let $(S,+)$ be a commutative semigroup, and let $A\subseteq S$ be a piecewise syndetic set. 
Then there exists a finite set $E\subseteq S$ such that
\[
\bigcup_{t\in E}(-t+A)
\]
is thick. Write $|E|=r$.

Let $F\in \mathcal{P}_f({}^{\mathbb{N}}S)$ be arbitrary, and write $|F|=n$.
Enumerate $F$ as
\[
F=\{f_1,f_2,\dots,f_n\}.
\]
Let $N=N(r,n)$ be the Hales--Jewett number, and set $G=[n]^N$.

Define a map $g\colon G\to S$ by
\[
g(a_1,a_2,\dots,a_N)=\sum_{i=1}^{N} f_{a_i}(i).
\]
Since $g(G)$ is finite and $\bigcup_{t\in E}(-t+A)$ is thick, there exists $b\in S$ such that
\[
b+g(G)\subseteq \bigcup_{t\in E}(-t+A).
\]

This induces an $r$-coloring $\chi$ of $G$ defined by
\[
\chi(a)=i \quad \text{if and only if} \quad b+g(a)\in -t_i + A,
\]
where $t_1,\dots,t_r$ enumerate the elements of $E$, and $i$ is chosen to be the least such index.

By the Hales--Jewett Theorem, there exists a monochromatic combinatorial line in $G$.
This yields a finite nonempty set $H\subseteq \mathbb{N}$ and an element $a\in S$ such that
\[
b+a+\sum_{t\in H} f_i(t)\in A \quad \text{for all } f_i\in F.
\]
Equivalently, there exists $s\in S$ such that
\[
s+\sum_{t\in H} f(t)\in A \quad \text{for all } f\in F.
\]
Hence $A$ is a $J$-set.
\end{proof}

Now we are in the position to prove the main result of this section.

\begin{proof}[\textbf{Proof of Theorem~\ref{central commutative}}]

Let $A$ be a central set in the commutative semigroup $(S,+)$.
By Definition~\ref{non commu cha central}, there exists a downward directed family
$\langle A_N\rangle_{N\in I}$ of subsets of $A$ such that
$\{A_N : N\in I\}$ is collectionwise piecewise syndetic and has the following property:
for each $N\in I$ and each $x\in A_N$, there exists $M\in I$ with
\[
A_M \subseteq -x + A_N.
\]

Fix $N\in I$ and consider the piecewise syndetic set $A_N$.

We define functions
\[
\alpha(F)\in S \quad \text{and} \quad H(F)\in \mathcal{P}_f(\mathbb{N}),
\]
for each $F\in \mathcal{P}_f({}^{\mathbb{N}}S)$, by induction on $|F|$,
so that the following conditions are satisfied:

\begin{itemize}
\item[(1)]
If $F,G\in \mathcal{P}_f({}^{\mathbb{N}}S)$ and
$\emptyset\neq G\subsetneq F$, then
\[
\max H(G) < \min H(F).
\]

\item[(2)]
Whenever $n\in\mathbb{N}$,
$G_1\subsetneq G_2\subsetneq\cdots\subsetneq G_n=F$,
and $(f_i)_{i=1}^n\in \prod_{i=1}^n G_i$, we have
\[
\sum_{i=1}^{n}\left(
\alpha(G_i)+\sum_{t\in H(G_i)} f_i(t)
\right)\in A_N.
\]
\end{itemize}

\medskip

\noindent
\textbf{Base step.}
Let $F=\{f\}$.
By Lemma~\ref{PiceJcomu}, the set $A_N$ is a $J$-set.
Hence there exist $a\in S$ and $L\in \mathcal{P}_f(\mathbb{N})$ such that
\[
a+\sum_{t\in L} f(t)\in A_N.
\]
Define $\alpha(\{f\})=a$ and $H(\{f\})=L$.

\medskip

\noindent
\textbf{Inductive step.}
Assume $|F|>1$ and that $\alpha(G)$ and $H(G)$ have been defined
for all nonempty proper subsets $G$ of $F$ satisfying the inductive hypotheses.

Let
\[
K=\bigcup\{H(G): \emptyset\neq G\subsetneq F\},
\qquad
m=\max K.
\]

Define
\[
\begin{aligned}
C=\Bigl\{
&\sum_{i=1}^{n}\Bigl(
\alpha(G_i)+\sum_{t\in H(G_i)} f_i(t)
\Bigr):
n\in\mathbb{N}, \\
&\quad G_1\subsetneq G_2\subsetneq\cdots\subsetneq G_n\subsetneq F,\;
(f_i)_{i=1}^n\in \prod_{i=1}^n G_i
\Bigr\}.
\end{aligned}
\]

The set $C$ is finite and $C\subseteq A_N$ by the inductive hypothesis.
Let
\[
B = A_N \cap \bigcap_{c\in C} (-c + A_N).
\]

For each $c\in C$, there exists $N(c)\in I$ such that
$A_{N(c)}\subseteq -c + A_N$.
Since the family $\langle A_N\rangle_{N\in I}$ is downward directed,
there exists $M\in I$ such that
\[
A_M \subseteq B.
\]
Thus $B$ is a $J$-set.

By Lemma~\ref{J set commit stron}, there exist
$a\in S$ and $L\in \mathcal{P}_f(\mathbb{N})$ such that
\[
\min L > m
\quad\text{and}\quad
a+\sum_{t\in L} f(t)\in A_M
\quad \text{for all } f\in F.
\]

Define $\alpha(F)=a$ and $H(F)=L$.
Then condition~(1) is satisfied by the choice of $L$.
Moreover, condition~(2) follows since
\[
\sum_{i=1}^{n}\left(
\alpha(G_i)+\sum_{t\in H(G_i)} f_i(t)
\right)
+
\left(
a+\sum_{t\in L} f(t)
\right)
\in A_N,
\]
for all chains $G_1\subsetneq\cdots\subsetneq G_n=F$.

This completes the inductive construction and hence the proof
of the Central Sets Theorem.
\end{proof}

\section{Noncommutative stronger Central set theorem}

For $m\in\mathbb{N}$, define
\[
\mathcal{J}_m
=
\Bigl\{
\bigl(t(1),t(2),\ldots,t(m)\bigr)\in \mathbb{N}^m :
t(1)<t(2)<\cdots<t(m)
\Bigr\}.
\]

In this section, we present a combinatorial proof of the following result,
which is a stronger version of Theorem~\ref{CST noncom}.

\begin{theorem}\label{noncom central}
Let $(S,\cdot)$ be a semigroup and let $A\subseteq S$ be a central set.
Then there exist
\[
m \colon \mathcal{P}_f({}^{\mathbb{N}}S)\to\mathbb{N},
\qquad
\alpha \in \prod_{F\in\mathcal{P}_f({}^{\mathbb{N}}S)} S^{\,m(F)+1},
\]
and
\[
\tau \in \prod_{F\in\mathcal{P}_f({}^{\mathbb{N}}S)} \mathcal{J}_{m(F)},
\]
such that the following conditions hold:
\begin{enumerate}
\item
If $F,G\in \mathcal{P}_f({}^{\mathbb{N}}S)$ and $F\subset G$, then
\[
\tau(F)\bigl(m(F)\bigr) < \tau(G)(1).
\]

\item
Whenever $n\in\mathbb{N}$,
$G_1\subset G_2\subset \cdots \subset G_n$ are elements of
$\mathcal{P}_f({}^{\mathbb{N}}S)$, and for each
$i\in\{1,2,\ldots,n\}$ we choose $f_i\in G_i$, one has
\[
\prod_{i=1}^{n}
x\bigl(m(G_i),\alpha(G_i),\tau(G_i),f_i\bigr)
\in A.
\]
\end{enumerate}
\end{theorem}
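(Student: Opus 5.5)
The proof will follow the same scheme as the proof of Theorem~\ref{central commutative}, adapted to the ordered products of the noncommutative setting. Here
\[
x(m,a,t,f)=\Bigl(\prod_{j=1}^{m} a(j)\,f(t(j))\Bigr)\cdot a(m+1),
\]
which is the natural noncommutative analogue of $\omega$. The first step is to introduce the noncommutative $J$-set notion. We call $A\subseteq S$ a $J$-set if for every $F\in\mathcal{P}_f({}^{\mathbb{N}}S)$ there exist $m\in\mathbb{N}$, $a\in S^{m+1}$ and $t\in\mathcal{J}_m$ such that $x(m,a,t,f)\in A$ for all $f\in F$. We then prove two lemmas in this setting.

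\textbf{Every piecewise syndetic set is a $J$-set.} We mimic Lemma~\ref{PiceJcomu}. Let $E=\{t_1,\dots,t_r\}$ be such that $\bigcup_{t\in E}t^{-1}A$ is thick, enumerate $F=\{f_1,\dots,f_n\}$, and let $N=HJ(r,n)$. Define $g\colon[n]^N\to S$ by $g(w)=\prod_{i=1}^{N} f_{w_i}(i)$, the product taken in increasing order of $i$. By thickness there is $b$ with $g([n]^N)\cdot b\subseteq\bigcup_{t\in E}t^{-1}A$, giving an $r$-colouring. A monochromatic line, with variable positions $t(1)<\dots<t(m)$, yields some $t_i$ with
\[
t_i\, g(\tau(k))\, b\in A \quad\text{for all } k\le n .
\]
Now $t_i g(\tau(k))b$ has the form $x(m,a,t,f_k)$. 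Here $a(1)$ is $t_i$ times the product of the constant letters before $t(1)$, each $a(j)$ collects the constant letters between $t(j-1)$ and $t(j)$, and $a(m+1)$ is the trailing constants times $b$.

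One technical point: $S$ may lack an identity, so empty gaps are a problem. I will handle this by letting $a(j)$ absorb letters. If two variable positions are adjacent, I merge or shift by putting a constant letter into $a(j)$ through the usual trick of using $t_i$ or $b$. Alternatively, I work in $S^1$ and note that the theorem's $S^{m+1}$ requirement can be met by grouping. This is the kind of routine bookkeeping I will note but not belabor.

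\textbf{Shifting the index set.} The second lemma is the analogue of Lemma~\ref{J set commit stron}: for any $p$ we may require $t(1)>p$. To get this, apply the $J$-set property to the shifted functions $f'(s)=f(s+p)$. The resulting $x$-expression for $f'$ equals that for $f$ with indices $t(j)+p$.

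For the main induction, fix the central family $\langle A_N\rangle_{N\in I}$ from Definition~\ref{non commu cha central}. Each $A_N$ is piecewise syndetic, hence a $J$-set. We define $m(F),\alpha(F),\tau(F)$ by induction on $|F|$. The invariant is that every product $\prod_{i=1}^n x(m(G_i),\alpha(G_i),\tau(G_i),f_i)$ over chains $G_1\subsetneq\dots\subsetneq G_n=F$ lies in $A_{N_0}$, together with condition (1).

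At stage $F$, let $C$ be the finite set of all such products over chains ending in a proper subset of $F$, so that $C\subseteq A_{N_0}$. Let $p$ be the maximum of all $\tau(G)(m(G))$ for $G\subsetneq F$. Using property (i) we get, for each $c\in C$, some $M_c$ with $A_{M_c}\subseteq c^{-1}A_{N_0}$. Downward directedness then gives $M$ with
\[
A_M\subseteq A_{N_0}\cap\bigcap_{c\in C}c^{-1}A_{N_0}.
\]
Applying the shifted $J$-set lemma to $A_M$ with bound $p$ defines $(m(F),\alpha(F),\tau(F))$. Since the new factor is multiplied on the right of $c$, we get $c\cdot x(\dots)\in A_{N_0}$, as required.

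\textbf{Main obstacle.} The main obstacle is keeping the invariant stable through further stages. The natural remedy is to strengthen it to "the product lies in some $A_K$", so that property (i) can be iterated. This works because property (i) only needs membership in $A_{N_0}$ for the current $c$, and every product is of the form $c\cdot x$ with $c\in C$ or a single $x\in A_M\subseteq A_{N_0}$. The other delicate point is the identity-free bookkeeping in the Hales--Jewett step.
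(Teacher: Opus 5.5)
Your argument follows the paper's structure. Hales--Jewett is applied to the ordered products $\prod_l f_{w_l}(l)$, translated on the right into the thick set $\bigcup_{t\in E}t^{-1}A$, to show piecewise syndetic sets are $J$-sets. The shift $f\mapsto f(\cdot+p)$ forces $t(1)>p$; the paper simply cites Hindman--Strauss for this. The induction on $|F|$ uses the finite set $C$ of earlier chain products and $A_M\subseteq A_{N_0}\cap\bigcap_{c\in C}c^{-1}A_{N_0}$, with the new factor placed to the right of $c$. Your verification ($c\in C\subseteq A_{N_0}$ and $x\in A_M\subseteq c^{-1}A_{N_0}$, so $c\,x\in A_{N_0}$) is the correct form of the paper's last step, which misstates it as $b\in A_M$. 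Your ``main obstacle'' is not one. The invariant that every chain product ending at $F$ lies in the fixed set $A_{N_0}$ already reproduces itself, exactly as your last sentence explains, so no change to ``some $A_K$'' is needed.

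The real hole is the identity-free bookkeeping in the Hales--Jewett step, which the paper skips silently. If the monochromatic line has adjacent variable positions $t(j)=t(j-1)+1$, the interior coefficient $a(j)$ is an empty product. Neither of your remedies fixes this:
\begin{itemize}
\item $t_i$ and $b$ sit at the two ends of $t_i\,g(w)\,b$ and cannot be moved into an interior gap;
\item no regrouping over $S^1$ removes an identity lying between $f(t(j-1))$ and $f(t(j))$, because both factors depend on $f$ and $x(m,a,t,f)$ needs an element of $S$ between consecutive $f$-values.
\end{itemize}
The standard fix is to build the gaps into the colouring. Fix $s_0\in S$ and use $g(w)=\prod_{l=1}^{N}\bigl(f_{w_l}(l)\,s_0\bigr)$. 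Then each interior $a(j)$ begins with the factor $s_0$ that follows $f(t(j-1))$, so it lies in $S$. The end coefficients $a(1)$ and $a(m+1)$ absorb $t_i$ and $b$ as before. With this change the rest of your proof goes through.
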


 Now, we are inviting $J$-sets from \cite[Definition 14.14.1]{HS12} in noncommutative settings.

\begin{definition}
Let $(S,\cdot)$ be a semigroup.
\begin{enumerate}
\item
Let
\[
\mathcal{T} = {}^{\mathbb{N}}S.
\]

\item
Given $m\in\mathbb{N}$, $a\in S^{m+1}$, $t\in\mathcal{J}_m$, and
$f\in\mathcal{T}$, define
\[
x(m,a,t,f)
=
\left(\prod_{j=1}^{m} \bigl(a(j)\cdot f(t(j))\bigr)\right)\cdot a(m+1).
\]

\item
A subset $A\subseteq S$ is called a \emph{$J$-set} if and only if,
for every $F\in\mathcal{P}_f(\mathcal{T})$, there exist
$m\in\mathbb{N}$, $a\in S^{m+1}$, and $t\in\mathcal{J}_m$
such that
\[
x(m,a,t,f)\in A \quad \text{for all } f\in F.
\]
\end{enumerate}
\end{definition}

We record the following consequence of \cite[Lemma~14.14.3]{HS12}.

\begin{lemma}\label{J set noncom stro}
Let $S$ be a semigroup and let $A\subseteq S$ be a $J$-set.
Then for each $F\in\mathcal{P}_f({}^{\mathbb{N}}S)$ and each $n\in\mathbb{N}$,
there exist $m\in\mathbb{N}$, $a\in S^{m+1}$, and $t\in\mathcal{J}_m$
such that $t(1)>n$ and
\[
x(m,a,t,f)\in A \quad \text{for all } f\in F.
\]
\end{lemma}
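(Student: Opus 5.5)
We reduce the lemma to the defining property of a $J$-set by \emph{shifting} the sequences, in the spirit of \cite[Lemma~14.14.3]{HS12}. Fix $F\in\mathcal{P}_f({}^{\mathbb{N}}S)$ and $n\in\mathbb{N}$. For each $f\in F$ define the shifted sequence $f'\in{}^{\mathbb{N}}S$ by $f'(k)=f(k+n)$, and put $F'=\{f':f\in F\}$. Then $F'\in\mathcal{P}_f({}^{\mathbb{N}}S)$; it may have fewer elements than $F$ if two members of $F$ agree beyond $n$, but this is harmless.

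Since $A$ is a $J$-set, applying the definition to $F'$ gives $m\in\mathbb{N}$, $a\in S^{m+1}$ and $s\in\mathcal{J}_m$ with $x(m,a,s,f')\in A$ for every $f'\in F'$. Define $t\in\mathbb{N}^m$ by $t(j)=s(j)+n$. Then $t(1)<\cdots<t(m)$, so $t\in\mathcal{J}_m$. Moreover $t(1)=s(1)+n\ge 1+n>n$.

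Unwinding the definition of $x$, for each $f\in F$ we have
\[
x(m,a,t,f)=\Bigl(\prod_{j=1}^{m}a(j)\cdot f\bigl(s(j)+n\bigr)\Bigr)\cdot a(m+1)
=\Bigl(\prod_{j=1}^{m}a(j)\cdot f'\bigl(s(j)\bigr)\Bigr)\cdot a(m+1)=x(m,a,s,f').
\]
This element lies in $A$, which proves the lemma. Notably, no associativity or commutativity beyond the semigroup axioms is needed, since the products are computed with identical factors in identical order.

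The only point requiring care is making sure the shift is applied to the \emph{sequences}, so that the $J$-set property is invoked on $F'$. Once that is done the argument is a direct index translation. Unlike the abstract proof in \cite{HS12}, no Hales--Jewett or ultrafilter argument is needed, because the hypothesis is already that $A$ is a $J$-set.
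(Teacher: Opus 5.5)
Your proof is correct. Applying the $J$-set property to the shifted family $F'=\{f' : f\in F\}$ with $f'(k)=f(k+n)$, and then translating the indices by $t(j)=s(j)+n$, gives exactly the required conclusion, and the possible non-injectivity of $f\mapsto f'$ is indeed harmless.

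The paper gives no argument of its own; it only records the statement as a consequence of \cite[Lemma~14.14.3]{HS12}. Your index shift is the standard elementary proof of that fact, so you should drop your closing remark suggesting that the cited proof needs Hales--Jewett or ultrafilter machinery.
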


To prove Theorem~\ref{noncom central}, we first show that
piecewise syndetic sets are $J$-sets in the noncommutative setting.

\begin{theorem}
Let $(S,\cdot)$ be a semigroup and let $A\subseteq S$ be a piecewise syndetic set.
Then $A$ is a $J$-set.
\end{theorem}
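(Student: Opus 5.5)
We mimic the proof of Lemma~\ref{PiceJcomu}, replacing sums by ordered products and using the Hales--Jewett Theorem to produce the variable positions $t(1)<\cdots<t(m)$.

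\textbf{Setup.} Since $A$ is piecewise syndetic, fix a finite set $E=\{s_1,\dots,s_r\}\subseteq S$ such that $\bigcup_{i=1}^r s_i^{-1}A$ is thick. Let $F=\{f_1,\dots,f_n\}\in\mathcal{P}_f({}^{\mathbb{N}}S)$ be given, put $N=HJ(r,n)$, and define $g\colon[n]^N\to S$ by
\[
g(w_1,\dots,w_N)=f_{w_1}(1)\cdot f_{w_2}(2)\cdots f_{w_N}(N),
\]
the product taken in increasing order of the index. The set $g([n]^N)$ is finite. Thickness, as defined in the paper, gives $b\in S$ with
\[
g([n]^N)\cdot b\subseteq\bigcup_{i} s_i^{-1}A .
\]

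\textbf{Colouring.} Colour $w\in[n]^N$ by the least $i$ with $s_i\,g(w)\,b\in A$. By Theorem~\ref{HJ} there is a variable word $\tau(*)$ whose line is monochromatic, say of colour $i$. Let $t(1)<\dots<t(m)$ be the positions of $*$ in $\tau$. Then for each $k\in[n]$ we have
\[
s_i\, g(\tau(k))\, b\in A,
\]
where $g(\tau(k))$ is the ordered product whose entries at positions $t(j)$ are $f_k(t(j))$, and whose other entries are the fixed elements $f_{\tau_p}(p)$, which do not depend on $k$.

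\textbf{Grouping into the form $x(m,a,t,f)$.} Split the product into blocks of fixed letters. For $j=1,\dots,m$ let $a(j)$ be $s_i$ times the product of fixed letters before $t(1)$ when $j=1$, and the product of fixed letters strictly between $t(j-1)$ and $t(j)$ when $j>1$. Let $a(m+1)$ be the product of fixed letters after $t(m)$, times $b$. Then
\[
s_i\,g(\tau(k))\,b=x(m,a,t,f_k)\in A \quad\text{for every } k.
\]

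\textbf{Main obstacle: empty blocks.} Some blocks may be empty, because two stars are adjacent or because there are no letters before the first star or after the last. In that case $a(j)$ is undefined and would have to be the identity, which $S$ may lack. Two of these cases resolve directly: if $j=1$, then $a(1)$ still contains $s_i$; and $a(m+1)$ always contains $b$. The remaining case is adjacent stars with nothing between them. To handle it, apply Hales--Jewett to the alphabet $[n]^2$ (or pad the word length) so that every variable position is followed by a fixed position. Concretely, colour words of length $2N$ in which only the odd coordinates vary, with each even coordinate $p$ fixed to the letter $f_1(p)$. Then use only the odd coordinates as the Hales--Jewett coordinates. Now every gap between consecutive variable positions contains at least one fixed letter, so every $a(j)\in S$ is well defined, and $t\in\mathcal{J}_m$. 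This shows that $A$ is a $J$-set.
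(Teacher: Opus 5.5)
Your proof is correct and is essentially the paper's argument: you apply Hales--Jewett to the ordered products $f_{i_1}(1)\cdots f_{i_N}(N)$, take a right translate $b$ from thickness, colour by which $s_i^{-1}A$ contains the translated product, and absorb $s_i$ into $a(1)$ and $b$ into $a(m+1)$. Your padding with fixed even coordinates also handles adjacent variable positions, where some $a(j)$ would otherwise be an empty product; the paper passes over this case when it simply asserts that the monochromatic line yields $a_1,\dots,a_{n+1}\in S$.
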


\begin{proof}
Let $A\subseteq S$ be piecewise syndetic.
Then there exists a finite set $F\subseteq S$ such that
\[
\bigcup_{x\in F} x^{-1}A
\]
is thick.

Let $G\in \mathcal{P}_f({}^{\mathbb{N}}S)$ and write
\[
G=\{f_1,f_2,\ldots,f_k\}.
\]
Let $|F|=r$, and let $N=N(k,r)$ be the Hales--Jewett number
guaranteed by Theorem~\ref{HJ}.

Define a correspondence map from $[k]^N$ into $S$ by
\[
(i_1,i_2,\ldots,i_N)
\longmapsto
f_{i_1}(1)\cdot f_{i_2}(2)\cdots f_{i_N}(N).
\]
Let
\[
H=
\Bigl\{
f_{i_1}(1)\cdot f_{i_2}(2)\cdots f_{i_N}(N)
:
(i_1,i_2,\ldots,i_N)\in [k]^N
\Bigr\}.
\]
Then $H$ is finite.

Since $\bigcup_{x\in F} x^{-1}A$ is thick, there exists $s\in S$ such that
\[
H\cdot s \subseteq \bigcup_{x\in F} x^{-1}A.
\]
Thus the set $H\cdot s$ admits an $r$-coloring.

Define a coloring $\chi$ of $[k]^N$ by
\[
\chi(i_1,i_2,\ldots,i_N)
=
\chi\bigl(
f_{i_1}(1)\cdot f_{i_2}(2)\cdots f_{i_N}(N)\cdot s
\bigr).
\]

By the Hales--Jewett Theorem, there exists a monochromatic
combinatorial line in $[k]^N$.
This yields elements $a_1,a_2,\ldots,a_{n+1}\in S$ and
$t=(t_1,t_2,\ldots,t_n)\in\mathcal{J}_n$ such that
\[
\Bigl\{
a_1\cdot f(t_1)\cdot a_2\cdot f(t_2)\cdots
a_n\cdot f(t_n)\cdot a_{n+1}
:
f\in G
\Bigr\}
\subseteq x^{-1}A
\]
for some $x\in F$.

Multiplying on the left by $x$, we obtain
\[
\Bigl\{
x\cdot a_1\cdot f(t_1)\cdot a_2\cdot f(t_2)\cdots
a_n\cdot f(t_n)\cdot a_{n+1}
:
f\in G
\Bigr\}
\subseteq A.
\]

Define
\[
a(1)=x\cdot a_1,
\qquad
a(i)=a_i \ \text{ for } i=2,\ldots,n+1,
\qquad
t=(t_1,t_2,\ldots,t_n).
\]
Then
\[
x(n,a,t,f)\in A \quad \text{for all } f\in G,
\]
and hence $A$ is a $J$-set.
\end{proof}

It is the right time to prove the main result of this section.

\begin{proof}[\textbf{Proof of Theorem~\ref{noncom central}}]

Let $A$ be a central set in the semigroup $(S,\cdot)$.
By Definition~\ref{non commu cha central}, there exists a downward directed family
$\langle A_N\rangle_{N\in I}$ of subsets of $A$ such that
$\{A_N : N\in I\}$ is collectionwise piecewise syndetic and has the property that
for each $N\in I$ and each $x\in A_N$, there exists $M\in I$ with
\[
A_M \subseteq x^{-1}A_N.
\]

Fix $N\in I$ and consider the piecewise syndetic set $A_N$.

We define $m(F)\in\mathbb{N}$, $\alpha(F)\in S^{m(F)+1}$, and
$\tau(F)\in\mathcal{J}_{m(F)}$ for each
$F\in\mathcal{P}_f({}^{\mathbb{N}}S)$ by induction on $|F|$,
so that the following conditions hold:
\begin{enumerate}
\item
If $\emptyset\neq G\subsetneq F$, then
\[
\tau(G)\bigl(m(G)\bigr) < \tau(F)(1).
\]

\item
Whenever $n\in\mathbb{N}$,
$\emptyset\neq G_1\subsetneq G_2\subsetneq\cdots\subsetneq G_n=F$,
and $f_i\in G_i$ for each $i\in\{1,2,\ldots,n\}$, one has
\[
\prod_{i=1}^{n}
x\bigl(m(G_i),\alpha(G_i),\tau(G_i),f_i\bigr)
\in A_N.
\]
\end{enumerate}

\medskip

\noindent
\textbf{Base step.}
Let $F=\{f\}$.
Since $A_N$ is a $J$-set, choose
$m(F)\in\mathbb{N}$, $\alpha(F)\in S^{m(F)+1}$, and
$\tau(F)\in\mathcal{J}_{m(F)}$ such that
\[
x\bigl(m(F),\alpha(F),\tau(F),f\bigr)\in A_N.
\]

\medskip

\noindent
\textbf{Inductive step.}
Assume $|F|>1$ and that $m(G)$, $\alpha(G)$, and $\tau(G)$
have been defined for all nonempty proper subsets $G$ of $F$.
Let
\[
k=\max\bigl\{\tau(G)\bigl(m(G)\bigr):\emptyset\neq G\subsetneq F\bigr\}.
\]

Define
\[
\begin{aligned}
C=\Bigl\{
&\prod_{i=1}^{n}
x\bigl(m(G_i),\alpha(G_i),\tau(G_i),f_i\bigr):
n\in\mathbb{N},\\
&\emptyset\neq G_1\subsetneq G_2\subsetneq\cdots\subsetneq G_n\subsetneq F,
\; f_i\in G_i
\Bigr\}.
\end{aligned}
\]

By the inductive hypothesis, $C$ is a finite subset of $A_N$.
Let
\[
B = A_N \cap \bigcap_{c\in C} c^{-1}A_N.
\]

For each $c\in C$, there exists $N(c)\in I$ such that
$A_{N(c)}\subseteq c^{-1}A_N$.
Since the family $\langle A_N\rangle_{N\in I}$ is downward directed,
there exists $M\in I$ such that
\[
A_M \subseteq B.
\]
Hence $B$ is a $J$-set.

Applying Lemma~\ref{J set noncom stro}, choose
$m(F)\in\mathbb{N}$, $\alpha(F)\in S^{m(F)+1}$, and
$\tau(F)\in\mathcal{J}_{m(F)}$ such that
\[
\tau(F)(1)>k
\quad\text{and}\quad
x\bigl(m(F),\alpha(F),\tau(F),f\bigr)\in A_M
\quad\text{for all } f\in F.
\]

Condition~(1) is immediate from the choice of $\tau(F)$.
To verify condition~(2), let $n\in\mathbb{N}$ and
\[
\emptyset\neq G_1\subsetneq G_2\subsetneq\cdots\subsetneq G_n=F,
\quad f_i\in G_i.
\]

If $n=1$, then
\[
x\bigl(m(G_1),\alpha(G_1),\tau(G_1),f_1\bigr)\in A_M.
\]
Assume $n>1$ and let
\[
b=\prod_{i=1}^{n-1}
x\bigl(m(G_i),\alpha(G_i),\tau(G_i),f_i\bigr).
\]
Then $b\in A_M$, and hence
\[
x\bigl(m(G_n),\alpha(G_n),\tau(G_n),f_n\bigr)\in b^{-1}A_M,
\]
which implies
\[
\prod_{i=1}^{n}
x\bigl(m(G_i),\alpha(G_i),\tau(G_i),f_i\bigr)\in A_M\subseteq A_N.
\]

This completes the induction and the proof.
\end{proof}

\bibliographystyle{plain}

\begin{thebibliography}{9}



\bibitem{BH90} V. Bergelson and N. Hindman,  Nonmetrizable topological dynamics and Ramsey theory. Transactions of the American Mathematical Society, 320 (1990), 293–320. https://doi.org/10.1090/s0002-9947-1990-0982232-5.

\bibitem{DHS08} D. De, N. Hindman, and D. Strauss, A new and stronger Central Sets Theorem, Fund. Math. 199 (2008), 155-175.  https://doi.org/10.4064/fm199-2-5. 


\bibitem{F81} H. Furstenberg: Recurrence in Ergodic Theory and Combinatorial Number Theory, Princeton University Press, 1981.

\bibitem{HJ63} A. Hales and R. Jewett, Regularity and positional games, Trans. Amer. Math. Soc. 106 (1963) 222-229. https://doi.org/10.1007/978-0-8176-4842-8-23.

\bibitem{H74} N. Hindman: Finite sums from sequences within cells of partitions of $\mathbb{N}$, 
J. Combin. Theory Ser. A \textbf{17} (1974), 1–11.

\bibitem{H20} N. Hindman, A history of central sets, Ergodic Theory Dynam. Systems 40(2020) no 1, 1-33.  https://doi.org/10.1017/etds.2018.37.


\bibitem{HS12} N. Hindman, and D. Strauss, Algebra in the	Stone-\v Cech compactification: theory and applications, second edition, de Gruyter, Berlin, 2012.

\bibitem{P15} D. Phulara, A generalized central sets theorem and applications. Topology and Its Applications, 196 (2015), 92–105. https://doi.org/10.1016/j.topol.2015.09.038.	


\bibitem{J15} J. H. Johnson Jr., A new and simpler noncommutative central sets theorem. Topology and Its Applications, 189 (2015) 10–24. https://doi.org/10.1016/j.topol.2015.03.006.



\bibitem{vdw} B.L. van der Waerden. Beweis einer baudetschen vermutung. \emph{Nieuw. Arch. Wisk.}, 15: 212--216, (1927).

\end{thebibliography}

\end{document}